\definecolor{verylight}{gray}{0.97}
\definecolor{light}{gray}{0.9}
\definecolor{medium}{gray}{0.85}
\definecolor{dark}{gray}{0.6}
 \def\G{{\mathcal G}}
  \def\Mc{{\mathcal M}}
 \def\opn#1#2{\def#1{\operatorname{#2}}} 
 \opn\chara{char} \opn\length{\ell} \opn\pd{pd} \opn\rk{rk}
 \opn\projdim{proj\,dim} \opn\injdim{inj\,dim} \opn\rank{rank}
 \opn\depth{depth} \opn\grade{grade} \opn\height{height}
 \opn\embdim{emb\,dim} \opn\codim{codim}
 \opn\Tr{Tr} \opn\bigrank{big\,rank}
 \opn\superheight{superheight}\opn\lcm{lcm}
 \opn\trdeg{tr\,deg}
 \opn\reg{reg} \opn\lreg{lreg} \opn\ini{in} \opn\lpd{lpd}
 \opn\size{size} \opn\sdepth{sdepth}
 \opn\link{link}\opn\fdepth{fdepth}\opn\lex{lex}
 \opn\tr{tr}
 \opn\type{type}
 \opn\gap{gap}
 \opn\arithdeg{arith-deg}
 \opn\div{div} \opn\Div{Div} \opn\cl{cl} \opn\Cl{Cl}
 \opn\Spec{Spec} \opn\Supp{Supp} \opn\supp{supp} \opn\Sing{Sing}
 \opn\Ass{Ass} \opn\Min{Min}\opn\Mon{Mon}
 \opn\Ann{Ann} \opn\Rad{Rad} \opn\Soc{Soc}
 \opn\Im{Im} \opn\Ker{Ker} \opn\Coker{Coker} \opn\Am{Am}
 \opn\Hom{Hom} \opn\Tor{Tor} \opn\Ext{Ext} \opn\End{End}
 \opn\Aut{Aut} \opn\id{id}
 \opn\nat{nat}
 \opn\pff{pf}
 \opn\Pf{Pf} \opn\GL{GL} \opn\SL{SL} \opn\mod{mod} \opn\ord{ord}
 \opn\Gin{Gin} \opn\Hilb{Hilb}\opn\sort{sort}
 \opn\PF{PF}\opn\Ap{Ap}
 \opn\mult{mult}
 \opn\bight{bight}
 \opn\aff{aff}
 \opn\relint{relint} \opn\st{st}
 \opn\lk{lk} \opn\cn{cn} \opn\core{core} \opn\vol{vol}  \opn\inp{inp} \opn\nilpot{nilpot}
 \opn\link{link} \opn\star{star}\opn\lex{lex}\opn\set{set}
 \opn\width{wd}
 \opn\Fr{F}
 \opn\QF{QF}
 \opn\G{G}
 \opn\type{type}\opn\res{res}
 \opn\conv{conv}
 \opn\gr{gr}
 \def\pot#1#2{#1[\kern-0.28ex[#2]\kern-0.28ex]}
 \opn\dirlim{\underrightarrow{\lim}}
 \opn\inivlim{\underleftarrow{\lim}}
 \let\iso=\cong
 \let\to=\rightarrow
 \def\Implies{\ifmmode\Longrightarrow \else
         \unskip${}\Longrightarrow{}$\ignorespaces\fi}
 \def\implies{\ifmmode\Rightarrow \else
         \unskip${}\Rightarrow{}$\ignorespaces\fi}
 \def\iff{\ifmmode\Longleftrightarrow \else
         \unskip${}\Longleftrightarrow{}$\ignorespaces\fi}
 \newtheorem{Theorem}{Theorem}[section]
 \newtheorem{Lemma}[Theorem]{Lemma}
 \newtheorem{Proposition}[Theorem]{Proposition}
 \newtheorem{Example}[Theorem]{Example}
 \newtheorem{Definition}[Theorem]{Definition}
 \let\epsilon\varepsilon
 \let\kappa=\varkappa
 \def\qed{\ifhmode\textqed\fi
       \ifmmode\ifinner\quad\qedsymbol\else\dispqed\fi\fi}
 \def\textqed{\unskip\nobreak\penalty50
        \hskip2em\hbox{}\nobreak\hfil\qedsymbol
        \parfillskip=0pt \finalhyphendemerits=0}
 \def\dispqed{\rlap{\qquad\qedsymbol}}
 \opn\dis{dis}
 \def\pnt{{\raise0.5mm\hbox{\large\bf.}}}
 \opn\Lex{Lex}
\begin{document}

\title {The edge ideal of a graph and its splitting graphs}

\author {J\"urgen Herzog, Somayeh Moradi and Masoomeh Rahimbeigi}

\address{J\"urgen Herzog, Fachbereich Mathematik, Universit\"at Duisburg-Essen, Campus Essen, 45117
Essen, Germany} \email{juergen.herzog@uni-essen.de}

\address{Somayeh Moradi, Department of Mathematics, School of Science, Ilam University,
P.O.Box 69315-516, Ilam, Iran}
\email{so.moradi@ilam.ac.ir}

\address{Masoomeh Rahimbeigi, Department of Mathematics, University of Kurdistan, Post
Code 66177-15175, Sanandaj, Iran}
\email{rahimbeigi$_{-}$masoome@yahoo.com}

\dedicatory{ }

\begin{abstract}
 We introduce and study the concept which we call the splitting of a graph and compare algebraic properties of the edge ideals of graphs and those of their splitting graphs.
\end{abstract}

\thanks{}

\subjclass[2010]{Primary 13F20; Secondary  13H10}


\keywords{graphs, stretching operators, resolutions, edge ideals}

\maketitle

\setcounter{tocdepth}{1}

\section*{Introduction}

For any monomial ideals $I$ and $J$ it is known that $\reg(I+J)\leq \reg(I)+\reg(J)-1$ and $\projdim(I+J)\leq \projdim(I)+\projdim(J)+1$, see \cite{Kal} and \cite{H}. Suppose we are given a finite simple graph $G'$ with connected components $G_1$ and $G_2$ and suppose we identify some vertex of $G_1$ with some vertex of $G_2$ to obtain the graph $G$. Then for the edge ideal, the above inequalities imply
$(i)\ \reg(I(G))\leq \reg(I(G'))$ and
$(ii)\ \projdim (I(G))\leq \projdim(I(G'))$.
The graph $G'$ may also be considered as a splitting graph of $G$ in the following sense. For a finite simple graph $G$, let $V(G)$ and $E(G)$ denote the vertex set and the edge set of $G$, respectively.
We call a graph $G'$ a splitting graph of $G$, if there exists a surjective map $\alpha\: V(G')\to V(G)$ such that $\alpha(e):=\{\alpha(v),\alpha(w)\}$ is an edge of $G$ for all edges $e = \{v,w\}$ of $G'$, and such that the map $E(G')\to E(G)$, $e\mapsto \alpha(e)$ is bijective.

This kind of splitting graphs naturally occur as graphs whose edge ideals are obtained by applying Kalai's shifting operator.

In this paper we study the question of whether the above inequalities $(i)$ and $(ii)$ are valid for any splitting graph of $G$. It turns out that this problem is harder than expected. In Theorem~\ref{special1}, we succeed to prove the desired inequalities for special classes of splittings.

On the other hand there are big classes of graphs for which the inequalities $(i)$ and $(ii)$ hold.
We show in Proposition~\ref{truepd} that the inequality $(i)$ holds if $G$ is a sequentially Cohen-Macaulay graph and in Proposition~\ref{bound}, it is proved that the inequality $(ii)$ holds when $G$ is
a chordal graph, a weakly chordal graph, a sequentially Cohen-Macaulay bipartite graph, an unmixed bipartite graph, a very well-covered graph or a $C_5$-free vertex decomposable graph.

In the literature, there is a well-studied concept of splittable monomial ideals due to Eliahou-Kervaire \cite{EK}. For this kind of splitting, the graded Betti numbers of a splittable monomial ideal $I=J+K$ can be expressed in terms of the graded Betti numbers of $J$, $K$ and $J\cap K$. Simple examples show that there is in general no comparison  possible for the graded Betti numbers of the edge ideal of a graph and its splitting graph. This is one of the reasons why it is hard to prove $(i)$ and $(ii)$ in general. However, we expect that $\beta_i(I(G))\leq \beta_i(I(G'))$ for all $i$ and we can prove this for special splittings.
At the end of the paper, we briefly discuss the relationship between shifted graphs and splitting graphs.

\section{Splitting graphs}
\label{secthree}

In this section we introduce the concept of splitting graphs of a given graph and compare their algebraic properties.

\begin{Definition}
{\em Let $G$ be a finite simple graph. We say that the graph $G'$ is a {\em splitting graph} of $G$, if there exists
a surjective map $\alpha\: V(G')\to V(G)$ such that $\alpha(e):=\{\alpha(v),\alpha(w)\}\in E(G)$ for all $e=\{v,w\}\in E(G')$ and such that the map $E(G')\to E(G)$, $e\mapsto \alpha(e)$ is bijective. We call $\alpha$ a splitting map of $G$.}
\end{Definition}

Observe that if the edges $e,f\in E(G')$ are neighbors in $G'$, then the edges $\alpha(e)$ and $\alpha(f)$ are neighbors in $G$.

Figure~\ref{one} shows an example of a splitting graph of a graph $G$.

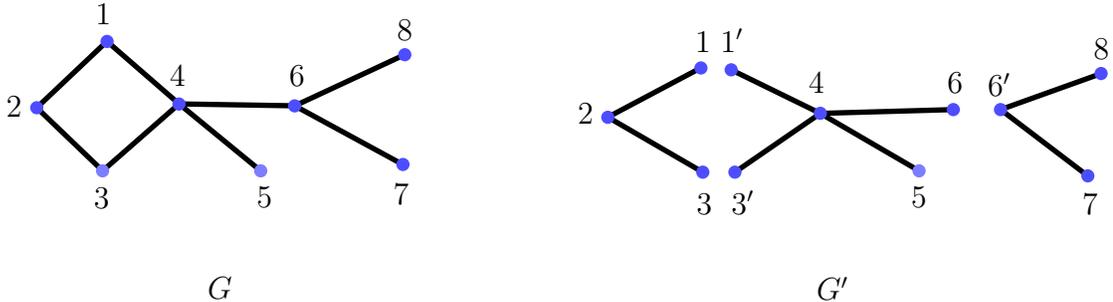
\begin{figure}[hbt]
\begin{center}
\newrgbcolor{ududff}{0.30196078431372547 0.30196078431372547 1.}
\newrgbcolor{xdxdff}{0.49019607843137253 0.49019607843137253 1.}
\psset{xunit=1.0cm,yunit=1.0cm,algebraic=true,dimen=middle,dotstyle=o,dotsize=5pt 0,linewidth=1.6pt,arrowsize=3pt 2,arrowinset=0.25}
\begin{pspicture*}(-10.28851230389455,-2.995971620646762)(10.315373556715628,5.406866850612861)
\psline[linewidth=2.pt](-1.2835852737024576,0.7135818528296999)(-0.047614897009425144,1.369402869034166)
\psline[linewidth=2.pt](-1.2835852737024576,0.7135818528296999)(-0.02239101177079183,-0.017910819090666306)
\psline[linewidth=2.pt](0.3559672668087079,1.3441789837955327)(1.5414898730244735,0.7640296233069664)
\psline[linewidth=2.pt](1.5414898730244735,0.7640296233069664)(0.4064150372859745,-0.017910819090666306)
\psline[linewidth=2.pt](1.5414898730244735,0.7640296233069664)(3.3071618397288054,0.8144773937842331)
\psline[linewidth=2.pt](1.5414898730244735,0.7640296233069664)(2.853131905433406,0.)
\psline[linewidth=2.pt](3.937758970694638,0.8144773937842331)(5.2746248883422036,1.2937312133182661)
\psline[linewidth=2.pt](3.937758970694638,0.8144773937842331)(5.09805769167177,-0.06835858956793293)
\rput[tl](-0.12,1.8595522295227323){$1$}
\rput[tl](-1.680629784751236,0.907312133182661){$2$}
\rput[tl](-0.104629864157125,-0.29537355671563276){$3$}
\rput[tl](1.3849226763540403,1.3074524750066){$4$}
\rput[tl](2.7508930530470726,-0.205974419542661){$5$}
\rput[tl](3.230594643058372,1.3098506395114328){$6$}
\rput[tl](5.02238603595587,-0.30452124315327){$7$}
\rput[tl](5.172833806433137,1.7677611476136659){$8$}
\rput[tl](0.21462395537690797,1.8895522295227323){$1'$}
\rput[tl](0.358206119195041,-0.26582132719289943){$3'$}
\rput[tl](3.765967888785572,1.305074524750066){$6'$}
\psline[linewidth=2.pt](-7.942690976701652,1.7225372623750326)(-8.875974730531084,0.8397012790228664)
\psline[linewidth=2.pt](-8.875974730531084,0.8397012790228664)(-8.,0.)
\psline[linewidth=2.pt](-8.,0.)(-6.984183337633586,0.8901490495001331)
\psline[linewidth=2.pt](-6.984183337633586,0.8901490495001331)(-7.942690976701652,1.7225372623750326)
\psline[linewidth=2.pt](-6.984183337633586,0.8901490495001331)(-5.445526338076954,0.8649251642614998)
\psline[linewidth=2.pt](-6.984183337633586,0.8901490495001331)(-5.899556272372354,0.)
\psline[linewidth=2.pt](-5.445526338076954,0.8649251642614998)(-3.982540994236222,1.5459700657045994)
\psline[linewidth=2.pt](-5.445526338076954,0.8649251642614998)(-4.0077648794748555,0.08298472186386696)
\rput[tl](-9.27900205781017,0.99402869034166){$2$}
\rput[tl](-8.1153714326619,-0.215974419542661){$3$}
\rput[tl](-7.105526649065386,1.405074524750066){$4$}
\rput[tl](-5.95134735428142,-0.20582132719289943){$5$}
\rput[tl](-5.52093534747387,1.402984099886993){$6$}
\rput[tl](-4.133884305668022,-0.1710452124315327){$7$}
\rput[tl](-4.084332076145289,2.017910819090654){$8$}
\rput[tl](-8.094034288133452,2.2288060490567656){$1$}
\rput[tl](-6.605825059054086,-1.4052245072154987){$G$}
\rput[tl](1.491042102547207,-1.4052245072154987){$G'$}
\begin{scriptsize}
\psdots[dotstyle=*,linecolor=ududff](-1.2835852737024576,0.7135818528296999)
\psdots[dotstyle=*,linecolor=ududff](-0.047614897009425144,1.369402869034166)
\psdots[dotstyle=*,linecolor=ududff](-0.02239101177079183,-0.017910819090666306)
\psdots[dotstyle=*,linecolor=ududff](0.3559672668087079,1.3441789837955327)
\psdots[dotstyle=*,linecolor=ududff](1.5414898730244735,0.7640296233069664)
\psdots[dotstyle=*,linecolor=ududff](0.4064150372859745,-0.017910819090666306)
\psdots[dotstyle=*,linecolor=ududff](3.3071618397288054,0.8144773937842331)
\psdots[dotstyle=*,linecolor=xdxdff](2.853131905433406,0.)
\psdots[dotstyle=*,linecolor=ududff](3.937758970694638,0.8144773937842331)
\psdots[dotstyle=*,linecolor=ududff](5.2746248883422036,1.2937312133182661)
\psdots[dotstyle=*,linecolor=ududff](5.09805769167177,-0.06835858956793293)
\psdots[dotstyle=*,linecolor=ududff](-7.942690976701652,1.7225372623750326)
\psdots[dotstyle=*,linecolor=ududff](-8.875974730531084,0.8397012790228664)
\psdots[dotstyle=*,linecolor=xdxdff](-8.,0.)
\psdots[dotstyle=*,linecolor=ududff](-6.984183337633586,0.8901490495001331)
\psdots[dotstyle=*,linecolor=ududff](-5.445526338076954,0.8649251642614998)
\psdots[dotstyle=*,linecolor=xdxdff](-5.899556272372354,0.)
\psdots[dotstyle=*,linecolor=ududff](-3.982540994236222,1.5459700657045994)
\psdots[dotstyle=*,linecolor=ududff](-4.0077648794748555,0.08298472186386696)
\end{scriptsize}
\end{pspicture*}
\end{center}
\caption{A graph $G$ and a splitting graph $G'$ of $G$.}
\label{one}
\end{figure}

In the example of Figure~\ref{one},  we define $\alpha\: V(G')\to V(G)$ by $\alpha(i)=i$  for $i=1,\ldots,8$ and $\alpha(1')=1$, $\alpha(2')=2$ and $\alpha(3') =3$. With respect to $\alpha$, $G'$ is indeed a splitting graph of $G$.

\medskip
A splitting graph does not necessarily need to decompose a graph into several connected components, as the example in Figure~\ref{two} shows. The graph $G''$ illustrated in Figure~\ref{two} is another splitting graph of the graph $G$ depicted in Figure~\ref{one}. This splitting graph is indecomposable.

\begin{figure}[hbt]
\begin{center}
\newrgbcolor{ududff}{0.30196078431372547 0.30196078431372547 1.}
\psset{xunit=0.8cm,yunit=0.8cm,algebraic=true,dimen=middle,dotstyle=o,dotsize=5pt 0,linewidth=1.6pt,arrowsize=3pt 2,arrowinset=0.25}
\begin{pspicture*}(-4.,0.7)(6.,6.)
\psline[linewidth=2.pt](-0.5,4.14)(0.12,3.04)
\psline[linewidth=2.pt](-1.44,3.04)(0.12,3.04)
\psline[linewidth=2.pt](0.12,3.04)(0.62,2.08)
\psline[linewidth=2.pt](0.12,3.04)(1.64,3.)
\psline[linewidth=2.pt](1.64,3.)(3.,4.)
\psline[linewidth=2.pt](1.64,3.)(3.,2.)
\rput[tl](0.06,1.18){$\large{G''}$}
\psline[linewidth=2.pt](-1.44,3.04)(-2.1,4.1)
\psline[linewidth=2.pt](-2.1,4.1)(-1.32,5.)
\begin{scriptsize}
\psdots[dotstyle=*,linecolor=ududff](-0.5,4.14)
\psdots[dotstyle=*,linecolor=ududff](-1.44,3.04)
\psdots[dotstyle=*,linecolor=ududff](0.12,3.04)
\psdots[dotstyle=*,linecolor=ududff](0.62,2.08)
\psdots[dotstyle=*,linecolor=ududff](3.,4.)
\psdots[dotstyle=*,linecolor=ududff](3.,2.)
\psdots[dotstyle=*,linecolor=ududff](1.64,3.)
\psdots[dotstyle=*,linecolor=ududff](-2.1,4.1)
\psdots[dotstyle=*,linecolor=ududff](-1.32,5.)
\end{scriptsize}
\end{pspicture*}
\end{center}
\caption{An indecomposable  splitting graph  of $G$.}
\label{two}
\end{figure}
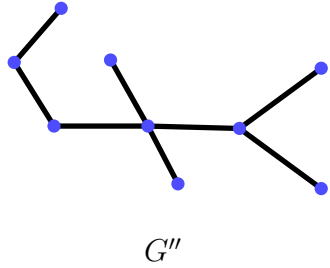

We expect the following properties to hold. Let $G'$ be a splitting graph of $G$. Then
\begin{itemize}
\item[(i)] $\projdim (I(G))\leq \projdim (I(G'))$;
\item[(ii)] $\reg (I(G))\leq \reg (I(G'))$;
\item[(iii)] $\beta_i (I(G))\leq \beta_i(I(G'))$ for all $i$.
\end{itemize}
For the graded Betti numbers,  an inequality as (iii) is not valid. Indeed, let $G$ be the path  graph with edges $E(G)=\{\{1,2\},\{2,3\}\}$ and $G'$ the splitting graph  of $G$ with edges $E(G')=\{\{1,2\},\{3,4\}\}$. Then $\beta_{1,2}(I(G))=1$ and  $\beta_{1,2}(I(G'))=0$, while $\beta_{1,3}(I(G))=0$ and $\beta_{1,3}(I(G'))=1$.

\medskip
Throughout this paper $G'$ denotes a splitting graph  of $G$, and  $S$ and  $S'$ are the polynomial rings over a given field $K$ in the variables corresponding to $V(G)$ and $V(G')$, respectively. Related to  the above inequalities  one may also expect that
\begin{itemize}
\item[(iv)] $\dim (S'/I(G'))\geq \dim (S/I(G))$;
\item[(v)] $\depth (S'/I(G'))\geq \depth  (S/I(G))$.
\end{itemize}

At present we are not able to prove (i), (ii) and (iii) in full generality. For splitting graphs which are special in the sense of Definition~\ref{special}, (i) and (ii) can be shown. Also (iii) can be proved for splitting graphs satisfying condition (2) of Definition  \ref{special}. In Proposition~\ref{trueornottrue}, it is shown that (iv) holds for any graph $G$ and any splitting graph of $G$ and (v) holds
for path graphs and cycle graphs of even length.

For a vertex $v$ of the graph $G$, let $N_{G}(v)=\{u\in V(G):\ \{u,v\}\in E(G)\}$ and $N_{G}[v]=N_{G}(v)\cup\{v\}$.
\begin{Definition}\label{special}
{\em  A splitting map $\alpha:V(G')\rightarrow V(G)$ is called  {\em special}, if either

(1) for any two vertices $v,v'\in V(G')$ with $\alpha(v)=\alpha(v')$, any vertex in $N_{G'}(v)$ is adjacent to any vertex in $N_{G'}(v')$; or

(2) for any two vertices $v,v'\in V(G')$ with $\alpha(v)=\alpha(v')$, $v$ and $v'$ belong to different connected components of $G'$.

Also $G'$ is called a special splitting graph of $G$ if the corresponding splitting map is special.
}
\end{Definition}

\begin{Theorem}
\label{special1}
Let $G'$ be a special splitting graph of $G$. Then
\begin{itemize}
\item[(i)] $\projdim (I(G))\leq \projdim (I(G'))$;
\item[(ii)] $\reg (I(G))\leq \reg (I(G'))$;
\end{itemize}
\end{Theorem}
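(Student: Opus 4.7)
The plan splits along the two cases of Definition~\ref{special}.

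\textbf{Case (2).} Under condition~(2), each connected component $H_1,\dots,H_s$ of $G'$ is carried injectively on vertices by $\alpha$ onto a subgraph $G_i$ of $G$. Since $\alpha$ is a bijection on edges globally, the $G_i$ partition $E(G)$, so $I(G)=\sum_i I(G_i)$ in $S$, while $I(G')=\sum_i I(H_i)$ in $S'$ with the summands supported in pairwise disjoint variable sets. For the latter one has the K\"unneth-type equalities $\projdim(I(G'))=\sum_i\projdim(I(H_i))+(s-1)$ and $\reg(I(G'))=\sum_i\reg(I(H_i))-(s-1)$. Iteratively applying the inequalities $\projdim(I+J)\le\projdim(I)+\projdim(J)+1$ and $\reg(I+J)\le\reg(I)+\reg(J)-1$ from \cite{Kal,H} to $I(G)=\sum_i I(G_i)$, together with $I(H_i)\cong I(G_i)$, yields (i) and (ii) immediately.

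\textbf{Case (1).} I would proceed by induction on the number of non-trivial fibres of $\alpha$. The inductive step merges a single pair $v,v'\in V(G')$ with $\alpha(v)=\alpha(v')$ into a new vertex $w$, producing a graph $G_1$ with an induced splitting map $\alpha_1\colon V(G_1)\to V(G)$. Bijectivity of $\alpha$ on edges forces $N_{G'}(v)\cap N_{G'}(v')=\emptyset$ and $\{v,v'\}\notin E(G')$, so $G_1$ is simple and $S_1/I(G_1)\cong S'/\bigl(I(G')+(v-v')\bigr)$, where $S_1=S'/(v-v')$. The crux is to prove that $y:=v-v'$ is a non-zerodivisor on $S'/I(G')$. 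Granting this, the short exact sequence
\[
0\to S'/I(G')\xrightarrow{\cdot y} S'/I(G')\to S_1/I(G_1)\to 0
\]
together with the standard change-of-rings identities for a regular linear form gives $\projdim(I(G_1))=\projdim(I(G'))$ and $\reg(I(G_1))=\reg(I(G'))$. A short check shows that $G_1$ inherits condition~(1) with respect to $\alpha_1$ (the neighbourhoods at the new vertex $w$ are exactly $N_{G'}(v)\cup N_{G'}(v')$, and for every other fibre the relevant neighbours transport cleanly by condition~(1) of $\alpha$), so the induction runs until $G$ is reached.

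\textbf{Regularity of $y$ --- the main obstacle.} I would invoke the classical description of $\Ass(S'/I(G'))$: a monomial prime $\mathfrak p_C$ is associated to $S'/I(G')$ if and only if $C=N_{G'}(D)$ for some independent set $D\subseteq V(G')$ such that $V(G')\setminus(D\cup N_{G'}(D))$ is also independent. Since $y\in\mathfrak p_C$ precisely when $\{v,v'\}\subseteq C$, the task is to exclude this. Suppose some such $D$ gave $v,v'\in N_{G'}(D)$; then there exist $d_1\in D\cap N_{G'}(v)$ and $d_2\in D\cap N_{G'}(v')$. Bijectivity of $\alpha$ on edges rules out common neighbours of $v$ and $v'$, hence $d_1\neq d_2$; condition~(1) then forces $\{d_1,d_2\}\in E(G')$, contradicting the independence of $D$. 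This proves that $y$ is regular on $S'/I(G')$, and with it the inductive step and the theorem. Everything else in the argument is routine bookkeeping via standard exact sequences and K\"unneth formulas; the combinatorial identification of the associated primes containing both $v$ and $v'$ is the one substantive step.
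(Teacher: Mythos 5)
Your proposal is correct and follows the paper's overall architecture exactly: case (2) is handled, as in the paper, by decomposing over the connected components and iterating the subadditivity bounds $\projdim(I+J)\le\projdim(I)+\projdim(J)+1$ and $\reg(I+J)\le\reg(I)+\reg(J)-1$ against the exact K\"unneth equalities for ideals in disjoint variables; case (1) is handled, again as in the paper, by identifying one pair $v,v'$ in a fibre at a time and running the short exact sequence associated to the linear form $v-v'$, the whole weight resting on the fact that $v-v'$ is a nonzerodivisor modulo $I(G')$. Where you genuinely diverge is in how that key fact is proved. The paper proves a standalone colon-ideal formula (Lemma~\ref{kernel}): for $N_G[x]\cap N_G[y]=\emptyset$ one has $I:(x-y)=I+(zw:\ z\in N_G(x),\ w\in N_G(y))$, established by a delicate cancellation argument on the monomial support of an element of $I:(x-y)$; condition (1) then forces the extra generators $zw$ to lie in $I$ already, so $I:(x-y)=I$. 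You instead use that $I(G')$ is radical, so $\Ass(S'/I(G'))=\Min(I(G'))$ corresponds to minimal vertex covers, and show no minimal cover contains both $v$ and $v'$: a witness would produce $d_1\in N_{G'}(v)$ and $d_2\in N_{G'}(v')$ outside the cover, with $d_1\ne d_2$ by edge-bijectivity and $\{d_1,d_2\}\in E(G')$ by condition (1), a contradiction. Your route is shorter and cleaner for the purpose at hand (and in fact yields equalities $\projdim(I(G_{i}))=\projdim(I(G_{i-1}))$ and $\reg(I(G_{i}))=\reg(I(G_{i-1}))$ at each merge, slightly more than the stated inequalities); the paper's route buys a general formula for $I:(x-y)$ that does not presuppose condition (1). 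Two cosmetic points: your characterization of the associated primes via independent sets $D$ with $C=N_{G'}(D)$ is an equivalent but roundabout restatement of ``minimal vertex covers,'' and the induction should run on $|V(G')|-|V(G)|$ rather than on the number of nontrivial fibres, since merging one pair need not kill a whole fibre. Neither affects correctness, and your ``short check'' that condition (1) is inherited by the merged graph is glossed over to the same degree as in the paper's own proof.
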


For the proof of Theorem~\ref{special1} we need the following result.

\begin{Lemma}\label{kernel}
Let $G$ be a graph. Let $x,y\in V(G)$ such that $N_G[x]\cap N_G[y]=\emptyset$. Then $I:(x-y)=I+(zw: \ z\in N_G(x), w\in N_G(y))$.
\end{Lemma}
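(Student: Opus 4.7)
My plan is to prove both inclusions, setting $J = I + (zw : z \in N_G(x), w \in N_G(y))$. The inclusion $J \subseteq I:(x-y)$ is immediate: for the extra generators, $(x-y)zw = (xz)w - z(yw) \in I$ since $xz$ and $yw$ are edges of $G$.

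For the reverse inclusion, I would work in the presentation $S = R[x,y]$ with $R = K[V(G)\setminus\{x,y\}]$, writing each $f\in S$ uniquely as $f = \sum_{i,j\geq 0} x^i y^j f_{ij}$ with $f_{ij}\in R$. Let $I_0 \subseteq R$ be the ideal generated by the edges of $G$ avoiding $\{x,y\}$, and let $A=(N_G(x))$, $B=(N_G(y))$ in $R$. The hypothesis $N_G[x]\cap N_G[y]=\emptyset$ forces $\{x,y\}\notin E(G)$ and $N_G(x)\cap N_G(y)=\emptyset$, so $I = I_0 S + xAS + yBS$ and $J = I + AB\cdot S$. Because these are monomial ideals, membership of a polynomial in $I$ translates coefficientwise into $f_{0,0}\in I_0$, $f_{i,0}\in I_0+A$ for $i\geq 1$, $f_{0,j}\in I_0+B$ for $j\geq 1$, and $f_{i,j}\in I_0+A+B$ for $i,j\geq 1$; the analogous characterization of membership in $J$ is identical, except that $f_{0,0}$ only needs to lie in $I_0+AB$.

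Applying the $I$-characterization to $(x-y)f$ and matching coefficients of $x^i y^j$ with those computed from $f$, one reads off $f_{i,0}\in I_0+A$ for all $i\geq 0$, $f_{0,j}\in I_0+B$ for all $j\geq 0$, and $f_{i-1,j}-f_{i,j-1}\in I_0+A+B$ for $i,j\geq 1$. The case $i=j=0$ of the first two forces $f_{0,0}\in(I_0+A)\cap(I_0+B)$. Here I would invoke the monomial-ideal identity $(I_0+A)\cap(I_0+B)=I_0+(A\cap B)$, which holds because any monomial in the intersection is either in $I_0$ or divisible by a generator of each of $A$ and $B$. Combined with $A\cap B = AB$, which uses the disjointness of $N_G(x)$ and $N_G(y)$ as sets of variables, this yields $f_{0,0}\in I_0+AB$. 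For $i,j\geq 1$ I would telescope along the antidiagonal: $f_{i,j}\equiv f_{i-1,j+1}\equiv\cdots\equiv f_{0,i+j}\pmod{I_0+A+B}$, and $f_{0,i+j}\in I_0+B\subseteq I_0+A+B$, giving $f_{i,j}\in I_0+A+B$. Taken together, these are exactly the conditions characterizing $f\in J$.

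The substantive step is the identity $(I_0+A)\cap(I_0+B)=I_0+(A\cap B)$ together with $A\cap B = AB$; this is the only place where the hypothesis $N_G[x]\cap N_G[y]=\emptyset$ is genuinely exploited, and without it the constant coefficient $f_{0,0}$ would not descend into $I_0+AB$.
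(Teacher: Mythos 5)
Your proof is correct, but it follows a genuinely different route from the paper's. The paper argues directly at the level of monomials: it expands $f=\sum\lambda_i u_i$ into monomials not lying in $I$, and shows by an extremal argument (choosing among the ``bad'' monomials one of maximal $x$-degree, locating its cancelling partner in $(x-y)f$, and using squarefreeness to reach a contradiction) that $u_ix\in I$ and $u_iy\in I$ for every $i$; from this it reads off a $z\in N_G(x)$ and a $w\in N_G(y)$ dividing $u_i$, hence $u_i\in(zw:\ z\in N_G(x),\ w\in N_G(y))$. You instead pass to the presentation $S=R[x,y]$, translate membership in $I$ and in $J$ into coefficientwise conditions on the $f_{ij}$, and then combine the monomial-ideal identity $(I_0+A)\cap(I_0+B)=I_0+(A\cap B)=I_0+AB$ with a telescoping argument along antidiagonals. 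Both arguments ultimately exploit the bigrading by $x$- and $y$-degree, and both use the hypothesis $N_G[x]\cap N_G[y]=\emptyset$ in the same two places (to ensure $I$ has the form $I_0S+xAS+yBS$ and to get $A\cap B=AB$). The paper's version is shorter and more self-contained, though its cancellation step is somewhat delicate and informally phrased; your version requires more setup but is more systematic, isolates cleanly where the hypothesis enters, and the coefficientwise membership criteria would adapt with little change to computing $I:(x-y)$ for other monomial ideals of this shape.
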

\begin{proof}
It is obvious that the right hand side is contained in the left hand side. To prove the other inclusion,  let $f\in (I:x-y)$ be a polynomial. Then
we can write $f=\sum_{r=1}^m \lambda_i u_i$ such that $\lambda_i\in K$ and $u_i$'s are pairwise distinct monomials in $\Supp(f)$ and non of them belong to $I$. Then

\begin{eqnarray}
\label{monomial}
f(x-y)=(\sum_{r=1}^m \lambda_i u_i)x-(\sum_{r=1}^m \lambda_i u_i)y\in I.
\end{eqnarray}

 We claim that for all $i$,  $u_ix\in I$ and $u_iy\in I$. By contradiction, assume that there exists $i$ such that $u_ix\notin I$  and set $A=\{u_i\in \Supp(f):\ u_i x\notin I\}$. Let $u_j\in A$ be a monomial which has the greatest degree in $x$ among the elements of $A$ and without loss of generality let $j=1$. Let $u_1=x^ay^b w$ for some monomial $w$ which is divided by neither $x$ nor $y$. Since $xu_1\notin I$, and $I$ is a monomial ideal, by (\ref{monomial}), we should have $\lambda_1xu_1=\lambda_{\ell}yu_{\ell}$ for some $\ell$. Then $u_{\ell}=x^{a+1}y^{b-1} w$. So by our assumption on $u_1$, we have  $u_{\ell}\notin A$ and then $xu_{\ell}=x^{a+2}y^{b-1}w\in I$. So $xyu_{\ell}\in I$. Therefore, $\lambda_1x^2u_1=\lambda_{\ell}xyu_{\ell}\in I$. Since $I$ is a squarefree monomial ideal, $xu_1\in I$, a contradiction. So
 we have  $u_ix\in I$ for any $i$. By similar argument $u_iy\in I$ for any $i$. This means that there exists $z\in N_G(x)$ such that $z$ divides $u_i$ and there exists
 $w\in N_G(y)$ such that $y$ divides $u_i$. Thus $u_i\in (zw:\ \ z\in N_G(x), w\in N_G(y))$ for any $i$.

\end{proof}

\begin{proof}[Proof of Theorem~\ref{special1}]
$(i)$ Assume that $G'$ a special splitting graph of $G$ with the splitting map $\alpha$ satisfying condition (1) of Definition~\ref{special}.
We set $G_0=G'$. Fix two vertices $x,y\in V(G')$ such that $\alpha(x)=\alpha(y)$ and let $G_1$ be a graph with the vertex set $V(G_1)=V(G_0)\setminus \{y\}$ and the edge set $E(G_1)=(E(G_0)\setminus \{\{y,w\}:\ w\in N_{G_0}(y)\})\cup\{\{x,w\}:\ w\in N_{G_0}(y)\}$. Then considering the map $\alpha_0: V(G_0)\rightarrow V(G_1)$ with

\begin{eqnarray}
\label{alpha}
\alpha_0(v) = \left\{
\begin{array}{ll}
v,  &   \text{if $v\neq y$};\\
x, &  \text{if $v=y$}\\
\end{array}\right.
\end{eqnarray}

$G_0$ is a special splitting graph of $G_1$. With the same argument as above one can define the sequence of graphs $G'=G_0,G_1,\ldots,G_t=G$ such that $G_{i-1}$ is a special splitting graph of $G_i$  with the splitting map $\alpha_i: V(G_{i-1})\rightarrow V(G_i)$ for any $1 \leq i \leq t$.
So it is enough to show that for such kind of splitting map $\alpha_i$, we have $\projdim (I(G_i))\leq \projdim (I(G_{i-1}))$.
We prove this inequality for $i=1$ and the others can be proved in the same way. Set $I=I(G')$.
Considering the short exact sequence
\begin{eqnarray}
\label{seq1}
0\rightarrow (S'/(I:x-y))(-1)  \rightarrow S'/I \rightarrow  S'/(I,x-y) \rightarrow 0,
\end{eqnarray}
we have
\begin{eqnarray}
\label{exact}
\ \ \ \ \ \  \projdim (S'/(I,x-y))\leq \max\{\projdim (S'/I),\projdim (S'/(I:x-y))+1\}.
\end{eqnarray}

Our assumptions on the splitting map and Lemma~\ref{kernel} imply that $(I:x-y)=I$. Therefore, (\ref{exact}) implies that
\begin{eqnarray}
\label{pd1}
\projdim (S'/(I,x-y))\leq \projdim (S'/I)+1.
\end{eqnarray}

Note that $(I,x-y)=(I(G_1),x-y)$. One can see that $x-y$ is a nonzero-divisor modulo $I(G_1)$.  Indeed,  since $y$ does not appear in the support of the generators of $I(G_1)$, $x-y$ behaves like a new variable.
Thus
\begin{eqnarray}
\label{pd2}
\projdim (S'/(I,x-y))&=&\projdim (S'/(I(G_1),x-y))\\
&=&\projdim  (S'/I(G_1))+1.\nonumber
 \end{eqnarray}
The desired conclusion follows from (\ref{pd1}) and (\ref{pd2}).

Assume that $G'$ a special splitting graph of $G$ with the splitting map $\alpha$ satisfying condition (2) of Definition~\ref{special}.
Let $G'_1,\ldots,G'_r$ be the connected components of $G'$. For any $1\leq  i\leq  r$, let $G_i$ be the graph with the vertex set $\alpha(V(G'_i))$ and the edge set $\alpha(E(G'_i))$. Then $I(G)=\sum_{i=1}^ r I(G_i)$. So by using \cite[Corollary 3.2]{H}, $\projdim (S'/I(G))\leq \sum_{i=1}^ r\projdim (S'/I(G_i))$. Since each $G'_i$ is connected, condition (2) of Definition~\ref{special} implies that $I(G'_i)=I(G_i)$. Therefore
we get  $$\projdim (S'/I(G))\leq \sum_{i=1}^ r\projdim (S'/I(G'_i))=\projdim (S'/I(G')).$$
The last equality follows from the fact that the ideals $I(G'_i)$ live in disjoint sets of variables.

$(ii)$ Let $G'$ be a special splitting graph of $G$ with the splitting map $\alpha$ satisfying condition (1) of Definition~\ref{special}.
With the same notation as in part $(i)$, it is enough to prove that
$\reg (I(G_i))\leq \reg (I(G_{i-1}))$.
We prove this inequality for $i=1$ and the others can be proved in the same way.
Considering again the short exact sequence $(\ref{seq1})$, we have
\begin{eqnarray}
\label{exact2}
\reg (S'/(I,x-y))\leq \max\{\reg (S'/I),\reg (S'/(I:x-y))\}.
\end{eqnarray}

The equality $(I:x-y)=I$ and (\ref{exact2}) imply that
\begin{eqnarray}
\label{pd3}
\reg (S'/(I,x-y))\leq \reg (S'/I).
\end{eqnarray}

As mentioned above, $(I,x-y)=(I(G_1),x-y)$ and $x-y$ is a nonzero-divisor modulo $I(G_1)$.
So
\begin{eqnarray}
\label{pd4}
\reg (S'/(I,x-y))=\reg (S'/(I(G_1),x-y))=\reg  (S'/I(G_1)).
 \end{eqnarray}
The desired conclusion follows from (\ref{pd3}) and (\ref{pd4}).

If $G'$ is a special splitting graph of $G$ with the splitting map $\alpha$ satisfying condition (2) of Definition~\ref{special}, then with the similar argument as part $(i)$ one can get the result.
\end{proof}

A subset $C\subseteq V(G)$ is called a
vertex cover of $G$ if it intersects all edges of $G$ and a
vertex cover of $G$ is called minimal if it has no proper
subset which is also a vertex cover of $G$. We set $\bight(I(G))=\max\{|C|:\ C\  \textrm{is a minimal vertex cover of }\ G\}$.

\begin{Proposition}
\label{truepd}
Let $G$ be a graph for which $\projdim (S/I(G))=\bight(I(G))$. Then $\projdim (I(G))\leq \projdim (I(G'))$. In particular, we have $\projdim (I(G))\leq \projdim (I(G'))$ when $G$ is a sequentially Cohen-Macaulay graph.

\end{Proposition}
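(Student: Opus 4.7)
The plan is to bound both projective dimensions against the common quantity $\bight(I(G))$, using two ingredients: the classical inequality $\projdim(S/I) \geq \bight(I)$ for any squarefree monomial ideal $I$, and a monotonicity $\bight(I(G)) \leq \bight(I(G'))$ under splitting maps. Granting these, the hypothesis assembles into the chain
$$\projdim(S/I(G)) \;=\; \bight(I(G)) \;\leq\; \bight(I(G')) \;\leq\; \projdim(S'/I(G')),$$
and since $\projdim(I(G)) = \projdim(S/I(G)) - 1$ and similarly for $G'$, the proposition follows. The general inequality $\projdim(S/I)\geq\bight(I)$ is immediate: for a squarefree monomial ideal every minimal prime $\pp$ lies in $\Ass(S/I)$, so $S_\pp/I_\pp$ has depth $0$, hence $\projdim(S/I) \geq \projdim_{S_\pp}(S_\pp/I_\pp) = \height(\pp)$; maximizing over minimal primes yields the claim.

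The heart of the argument is the monotonicity $\bight(I(G)) \leq \bight(I(G'))$. I would start with a minimal vertex cover $C\subseteq V(G)$ of maximum size $\bight(I(G))$ and lift it to $C':=\alpha^{-1}(C)\subseteq V(G')$. This $C'$ is automatically a vertex cover of $G'$: for any $e'=\{v',w'\}\in E(G')$, the image $\alpha(e')\in E(G)$ is covered by $C$, so one of $v',w'$ lies in $C'$. Then I would pick any minimal vertex cover $D\subseteq C'$ of $G'$ and prove the key claim $C\subseteq\alpha(D)$, which gives $|D|\geq|\alpha(D)|\geq|C|=\bight(I(G))$. To establish the claim, for each $v\in C$ the minimality of $C$ provides an edge $\{v,u\}\in E(G)$ with $u\notin C$; the bijectivity of $E(G')\to E(G)$ then produces a unique edge $\{v',u'\}\in E(G')$ with $\alpha(v')=v$, $\alpha(u')=u$, and since $u'\in\alpha^{-1}(u)\subseteq V(G')\setminus C'\subseteq V(G')\setminus D$ while $D$ must cover $\{v',u'\}$, we get $v'\in D$, hence $v=\alpha(v')\in\alpha(D)$. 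This edge-lifting step is the only place where the precise hypothesis on a splitting map (namely, the edge bijectivity together with the fact that $\alpha$ sends edges to edges) is essential, and it is what I expect to be the main obstacle for anyone trying to shortcut the argument.

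For the \emph{in particular} assertion, I would invoke the standard fact that if $S/I$ is sequentially Cohen-Macaulay for a squarefree monomial ideal $I$, then $\projdim(S/I)=\bight(I)$: in a filtration of $S/I$ by Cohen-Macaulay factors of strictly increasing dimensions, $\depth(S/I)$ equals the smallest factor dimension, which coincides with $\min_{\pp\in\Ass(S/I)}\dim(S/\pp)=n-\bight(I)$ where $n=|V(G)|$, and Auslander--Buchsbaum then forces the equality. Thus the hypothesis of the proposition is automatic when $G$ is sequentially Cohen-Macaulay, and the first part of the proof delivers the conclusion.
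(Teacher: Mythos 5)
Your proposal is correct and follows essentially the same route as the paper: both reduce to the monotonicity $\bight(I(G))\leq\bight(I(G'))$ via the classical bound $\bight(I)\leq\projdim(S/I)$, and both establish it by lifting a maximum minimal vertex cover $C$ to $C'=\alpha^{-1}(C)$ and showing that any minimal cover $D\subseteq C'$ of $G'$ has image all of $C$ (the paper deduces $\alpha(D)=C$ from minimality of $C$, you verify $C\subseteq\alpha(D)$ directly by edge lifting --- an equivalent step). Your extra justification of the \emph{in particular} clause, which the paper leaves to a citation, is also sound.
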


\begin{proof}
Let $G$ be a graph with $\projdim (S/I(G))=\bight(I(G))$. By \cite[Corollary 3.33]{MV}, $\bight(I(G'))-1 \leq \projdim (I(G'))$. So it is enough to show that $\bight(I(G))\leq \bight(I(G'))$. Let $C$ be a minimal vertex cover of $G$ with $\bight(I(G))=|C|$ and let $C'$ be the preimage of $C$ under the surjective map $\alpha\: V(G')\to V(G)$ attached to the splitting graph of $G$. Then $C'$ is  a vertex cover of $G'$. Let $D'$ be a minimal vertex cover of $G'$ with $D'\subseteq C'$. One can see that $\alpha(D')$ is a vertex cover of $G$. Also $\alpha(D')\subseteq\alpha(C')=C$. Since $C$ is a minimal vertex cover of $G$, we should have $\alpha(D')=C$. The inequality  $|C|\leq |D'|\leq \bight(I(G'))$ completes the proof.
\end{proof}

\begin{Proposition}
\label{bound}
If $G$ is a graph with $\reg (I(G))=\nu(G)+1$, then $\reg (I(G))\leq \reg (I(G'))$. In particular,  $\reg (I(G))\leq \reg (I(G'))$ in the following cases:
\begin{itemize}
  \item $G$ is a chordal graph;
  \item $G$ is a weakly chordal graph;
  \item $G$ is a sequentially Cohen-Macaulay bipartite graph;
  \item $G$ is an unmixed bipartite graph;
  \item $G$ is a very well-covered graph;
  \item $G$ is a $C_5$-free vertex decomposable graph.
\end{itemize}
\end{Proposition}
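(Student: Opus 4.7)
The plan is to reduce the statement to the single inequality $\nu(G)\leq \nu(G')$, where $\nu$ denotes the induced matching number, and then combine it with the well-known lower bound $\reg(I(G'))\geq \nu(G')+1$ due to Katzman. Assuming $\reg(I(G))=\nu(G)+1$, the chain
\[
\reg(I(G))=\nu(G)+1\leq \nu(G')+1\leq \reg(I(G'))
\]
gives the conclusion. The particular cases then follow by invoking the known equalities $\reg(I(G))=\nu(G)+1$ for chordal graphs (H\`a--Van Tuyl), weakly chordal graphs (Woodroofe), sequentially Cohen--Macaulay bipartite graphs, unmixed bipartite graphs (Kummini), very well-covered graphs, and $C_5$-free vertex decomposable graphs. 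So the real content is the comparison of induced matching numbers.

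To prove $\nu(G)\leq \nu(G')$, I would take any induced matching $M=\{e_1,\ldots,e_k\}$ of $G$ of maximum size and lift it to $G'$ by applying the inverse of the bijection $\alpha\colon E(G')\to E(G)$ guaranteed by the splitting. Let $e'_i=\alpha^{-1}(e_i)$ and set $M'=\{e'_1,\ldots,e'_k\}$. I then claim that $M'$ is an induced matching in $G'$. Suppose first that two edges $e'_i$ and $e'_j$ shared a vertex $v'\in V(G')$; then $\alpha(v')$ would lie in both $\alpha(e'_i)=e_i$ and $\alpha(e'_j)=e_j$, contradicting the fact that $M$ is a matching. Now suppose there is an edge $f'\in E(G')$ connecting some vertex of $e'_i$ to some vertex of $e'_j$ with $i\neq j$. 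Then $\alpha(f')$ connects a vertex of $e_i$ to a vertex of $e_j$ in $G$, and by bijectivity of $\alpha$ on edges and $f'\neq e'_i,e'_j$ we have $\alpha(f')\neq e_i,e_j$. This edge $\alpha(f')$ therefore witnesses a non-matching edge between the vertex sets of $e_i$ and $e_j$, contradicting the induced matching property of $M$. Hence $M'$ is an induced matching of $G'$ of size $k=\nu(G)$, so $\nu(G)\leq \nu(G')$.

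The main obstacle, in the sense of requiring the most care, is precisely the second half of the induced-matching argument: one has to use the bijectivity of $\alpha$ on edges (not just on vertices, which fails in general) to rule out an accidental $\alpha(f')\in\{e_i,e_j\}$. Once that subtlety is handled, the proof is essentially immediate. Combining the inequality $\nu(G)\leq \nu(G')$ with Katzman's bound $\reg(I(G'))\geq \nu(G')+1$ and the hypothesis $\reg(I(G))=\nu(G)+1$ finishes the main statement, and the six particular cases follow by citing the corresponding results in the literature.
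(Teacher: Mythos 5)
Your proposal is correct and follows essentially the same route as the paper: both reduce the statement to $\nu(G)\leq\nu(G')$ via Katzman's bound $\reg(I(G'))\geq\nu(G')+1$, lift a maximum induced matching of $G$ through the edge bijection, and verify that adjacency of edges is preserved under $\alpha$ to rule out a violating edge. The only cosmetic difference is that the paper phrases the final step in terms of edge neighbors (so that $\alpha(f')$ being adjacent to both $e_i$ and $e_j$ already contradicts the induced matching property, without needing to separately exclude $\alpha(f')\in\{e_i,e_j\}$), but this does not change the substance of the argument.
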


\begin{proof}
\label{matching}
Let $G$ be a graph with $\reg (I(G))=\nu(G)+1$. By \cite[Lemma 2.2]{K}, we have $\nu(G')+1\leq \reg (I(G'))$.
Thus to prove our statement we need to show that
\begin{eqnarray}
\label{easier}
\nu(G)\leq \nu(G').
\end{eqnarray}
Let $\alpha\: V(G')\to V(G)$ be the surjective map attached to the splitting graph of $G$. Note that if the edges $e,f\in E(G')$ are neighbors in $G'$, then the edges $\alpha(e)$ and $\alpha(f)$ are neighbors in $G$.
Let $e_1,\ldots,e_r$ be any induced matching of $G$ and for any $1\leq i\leq r$, let $e'_i\in E(G')$ be such that $\alpha(e'_i)=e_i$. Then $e'_1,\ldots,e'_r$ are pairwise disjoint. It is enough to show that $e'_1,\ldots,e'_r$ is an induced matching in $G'$. Suppose that $e'_1,\ldots,e'_r$ is not an induced matching, then there exists an edge $e'\in E(G')$ with neighbors $e'_i$ and $e'_j$ for some distinct $i,j\in\{1,\ldots,r\}$. Then $\alpha(e')$ is also neighbor with $e_i$ and $e_j$ in $G$ for some $i,j\in\{1,\ldots,r\}$, a contradiction.
The last statements follows from \cite[Corollary 6.9]{HT1}, \cite[Theorem 14]{W},  \cite[Theorem 3.3]{VT}, \cite[Theorem 1.1]{Ku}, \cite[Theorem 1.3]{MM} and \cite[Theorem 2.4]{MK}, respectively.

\end{proof}

\begin{Proposition}
\label{totalbetti}
Let $G'$ be a special splitting graph of $G$ with the splitting map $\alpha$ satisfying condition (2) of Definition~\ref{special}.
Then $\beta_i(I(G))\leq \beta_i(I(G'))$.
\end{Proposition}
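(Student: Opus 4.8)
The plan is to reduce to the case of condition (2) by exploiting the disjoint-component structure, exactly as in the proof of Theorem~\ref{special1}(i). Under condition (2), whenever two vertices $v,v'\in V(G')$ satisfy $\alpha(v)=\alpha(v')$, they lie in different connected components of $G'$. So I would let $G_1',\ldots,G_r'$ be the connected components of $G'$ and set $G_i$ to be the graph on $\alpha(V(G_i'))$ with edge set $\alpha(E(G_i'))$. Because each $G_i'$ is connected, condition (2) forces $\alpha$ to be injective on each $V(G_i')$, whence $I(G_i')=I(G_i)$ as ideals (after identifying variables via $\alpha$). The bijectivity of $\alpha$ on edges then gives $I(G)=\sum_{i=1}^r I(G_i)=\sum_{i=1}^r I(G_i')$.

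The key structural point I would isolate is that the ideals $I(G_1'),\ldots,I(G_r')$ live in pairwise disjoint sets of variables inside $S'$, since the $V(G_i')$ are disjoint vertex sets of distinct components. For ideals in disjoint variables, the minimal free resolution of the sum is the tensor product of the individual resolutions, so $\beta_i(I(G'))=\beta_i\bigl(\sum_{j} I(G_j')\bigr)$ is computed by the Künneth-type formula. Concretely, passing to $S'/I(G')\cong \bigotimes_j S'_j/I(G_j')$ (tensor over $K$), the total Betti numbers multiply in the sense that the Poincaré series factor, $\prod_j P_{S_j'/I(G_j')}(t)$.

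Now I would compare this with $I(G)=\sum_j I(G_j')$ viewed inside the smaller ring $S$, where the $\alpha$-images of the variable sets may overlap (that is precisely what distinguishes $G$ from $G'$). The comparison I want is $\beta_i(I(G))\leq \beta_i(I(G'))$. The clean way to get this is to invoke \cite[Corollary 3.2]{H}, or more precisely the inductive Betti-number inequality for sums of monomial ideals that underlies it: for monomial ideals $J,K$ one has a bound on $\beta_i(J+K)$ in terms of the $\beta$'s of $J$, $K$, and $J\cap K$ coming from the Mayer--Vietoris / mapping-cone sequence $\Tor(J\cap K)\to \Tor(J)\oplus\Tor(K)\to \Tor(J+K)$. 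I would apply this inductively on $r$: writing $I(G)=\bigl(\sum_{j<r} I(G_j)\bigr)+I(G_r)$, the surjectivity in the long exact sequence yields $\beta_i(I(G))\leq \beta_i(\sum_{j<r}I(G_j))+\beta_{i-1}(\text{intersection terms})$, and one checks these match the tensor-product contributions producing $\beta_i(I(G'))$.

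The main obstacle I anticipate is controlling the intersection terms when the variable sets overlap in $G$ but not in $G'$: in $S'$ the intersection $I(G_i')\cap I(G_j')$ is generated by products of generators (disjoint support, so it is simply the product ideal with the expected resolution), whereas in $S$ the identifications from $\alpha$ can only \emph{shrink} the intersection's support and merge generators, which should not increase Betti numbers. Making this monotonicity precise — that specializing along the variable identification $\alpha\colon S'\to S$ can only decrease each $\beta_i$ when the resolution of the $G'$-side splits as a tensor product — is the delicate step, and I would likely phrase it as: the minimal resolution of $S/I(G)$ is obtained as a direct summand of (or is dominated termwise by) the iterated mapping cone built from the split resolution over $S'$, so no Betti number can grow. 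The disjointness of the components in $G'$ is exactly what guarantees the $S'$-side resolution is maximally split, giving the upper bound.
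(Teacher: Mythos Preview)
Your setup is exactly the paper's: decompose $G'$ into connected components $G_1',\ldots,G_r'$, note that condition~(2) makes $\alpha$ injective on each $V(G_i')$ so $I(G_i')=I(G_i)$, and write $I(G)=\sum_i I(G_i)$ while the $I(G_i')$ sit in disjoint variable sets in $S'$.

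Where you diverge is in the comparison step. You invoke a Mayer--Vietoris bound of the shape $\beta_i(J+K)\leq \beta_i(J)+\beta_i(K)+\beta_{i-1}(J\cap K)$ and then try to control the intersection term when passing from $S'$ to $S$, calling this ``the delicate step''. That step is genuinely problematic as stated: in $S$ the ideals $I(G_j)$ may share variables, so $I(G_j)\cap I(G_k)$ is not simply the product ideal, and you give no argument that its Betti numbers are bounded by those of $I(G_j')\cap I(G_k')=I(G_j')I(G_k')$. Your closing claim that ``specializing along $\alpha$ can only decrease each $\beta_i$'' is precisely the statement to be proved, not a tool you may invoke.

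The paper sidesteps this entirely by citing the \emph{convolution} form of Herzog's inequality, \cite[Corollary~3.1]{H} (you wrote 3.2, which is the projective-dimension corollary used in Theorem~\ref{special1}). That result says directly that for monomial ideals $J,K$ one has
\[
\beta_i(S/(J+K))\ \leq\ \sum_{j=0}^{i}\beta_j(S/J)\,\beta_{i-j}(S/K),
\]
with no intersection term appearing. The right-hand side is exactly $\beta_i$ of the tensor product $S/J\otimes_K S/K$ when $J,K$ live in disjoint variables. Applying this with $J=I(G_1)=I(G_1')$ and $K=I(G_2)=I(G_2')$ (and iterating for $r>2$) gives
\[
\beta_i(S/I(G))\ \leq\ \sum_{j}\beta_j(S/I(G_1'))\,\beta_{i-j}(S/I(G_2'))\ =\ \beta_i(S'/I(G')),
\]
and the proof is complete in one line. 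So the missing idea is simply to use the multiplicative (Taylor-complex) bound from \cite{H} rather than the additive Mayer--Vietoris bound; the former already packages the ``tensor-product upper bound'' you were aiming for and eliminates your delicate step.
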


\begin{proof}
First assume that $G'$ has two connected components $G'_1$ and $G'_2$ and let $G_i$ be the graph with the vertex set $\alpha(V(G'_i))$ and the edge set $\alpha(E(G'_i))$, for $i=1,2$. Then $I(G)=I(G_1)+I(G_2)$. So by \cite[Corollary~3.1]{H}, $\beta_i (S/I(G))\leq \sum_{j=0}^i \beta_j (S/I(G_1))\beta_{i-j} (S/I(G_2))$. Since each $G'_i$ is connected, condition (2) of Definition~\ref{special} implies that $I(G'_i)=I(G_i)$. Therefore
we get  $$\beta_i (S/I(G))\leq \sum_{j=0}^i \beta_j (S/I(G'_1))\beta_{i-j} (S/I(G'_2))=\beta_i (S/I(G')),$$
The last equality follows from the fact that the ideals $I(G'_i)$ live in disjoint sets of variables.
In general if $G'$ has $r$ connected components, then repeating the above argument, one can get the desired inequality.
\end{proof}

\begin{Proposition}
\label{trueornottrue}
The inequality $\dim (S'/I(G'))\geq \dim (S/I(G))$ is valid for any graph and we have $\depth (S'/I(G'))\geq \depth  (S/I(G))$, when $G$ is a path graph or a cycle of even length.
\end{Proposition}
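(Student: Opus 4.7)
The plan is to prove the dimension inequality in full generality and then to deduce the depth inequality for paths and even cycles by first classifying all possible splitting graphs of these families and then applying the known depth formulas for the edge ideals of paths and cycles.

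For the dimension part, I would use that $\dim(S/I(G))=\alpha(G)$, the independence number. Taking a maximum independent set $A\subseteq V(G)$ and setting $A'=\alpha^{-1}(A)$, any edge of $G'$ with both endpoints in $A'$ would be carried by $\alpha$ to an edge of $G$ joining two vertices of $A$, contradicting independence. Hence $A'$ is independent in $G'$, and since $\alpha|_{A'}$ surjects onto $A$ we have $|A'|\geq|A|$, giving $\dim(S'/I(G'))=\alpha(G')\geq\alpha(G)=\dim(S/I(G))$.

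For the depth part, the key combinatorial observation is that two edges of a splitting graph $G'$ can share a vertex only when their images in $G$ do. Applied to $G=P_n$, this forces the subgraph of $G'$ induced by the vertices of positive degree to be a disjoint union of paths $P_{m_1}\sqcup\cdots\sqcup P_{m_k}$ with $\sum_{i=1}^k (m_i-1)=n-1$, together possibly with isolated vertices arising from additional preimages. Applied to $G=C_{2n}$, the same reasoning yields two subcases: either that subgraph is a single copy of $C_{2n}$, or it is a disjoint union of $k\geq 1$ paths with $\sum m_i=2n+k$; again there may be additional isolated vertices. Since the edge ideals of vertex-disjoint graphs live in disjoint variable sets, depth is additive over disjoint components, and each isolated vertex contributes exactly $1$ to the depth.

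Combining these structural facts with the standard formulas $\depth(S/I(P_m))=\lceil m/3\rceil$ and $\depth(S/I(C_m))=\lceil (m-1)/3\rceil$, the path case reduces to
\[
\sum_{i=1}^k\lceil m_i/3\rceil \ \geq\ \lceil (m_1+\cdots+m_k)/3\rceil \ =\ \lceil (n-1+k)/3\rceil \ \geq\ \lceil n/3\rceil
\]
using superadditivity of the ceiling and $k\geq 1$. The cycle case is immediate in the subcase $G'\supseteq C_{2n}$, and in the path-decomposition subcase the same estimate gives $\sum\lceil m_i/3\rceil\geq\lceil (2n+1)/3\rceil\geq\lceil (2n-1)/3\rceil=\depth(S/I(C_{2n}))$, the last inequality holding since $2n+1>2n-1$. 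The main obstacle is not conceptual but structural: one must carefully enumerate the splittings of paths and cycles (including the possibility of extra isolated preimages) and invoke the correct depth formulas for the edge ideals of paths and cycles from the literature.
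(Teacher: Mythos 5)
Your proof is correct and follows essentially the same route as the paper: the dimension inequality is obtained by pulling back an optimal vertex set (you use maximum independent sets where the paper uses minimum vertex covers, which is just the complementary formulation of $\dim(S/I(G))$), and the depth inequality is obtained by decomposing $G'$ into disjoint paths (plus isolated vertices) and invoking the known depth/projective-dimension formulas for paths and cycles from Jacques' thesis. Your write-up is in fact slightly more complete, since you justify the structural decomposition of $G'$ (maximum degree $2$ plus the forest/Eulerian argument) and carry out the even-cycle case explicitly, both of which the paper only sketches.
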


\begin{proof}
Let $G$ be an arbitrary graph, $n=\dim (S)$ and $n'=\dim (S')$. Then $n=|V(G)|$, $n'=|V(G')|$, and we have $\dim (S/I(G))=n-\mu$ and $\dim (S'/I(G'))=n'-\mu'$, where $\mu$ is the cardinality of a vertex cover of $G$ of minimal size, and $\mu'$ is the cardinality of a vertex cover of $G'$ of minimal size. Let $C$ be a vertex cover of $G$ with $|C|=\mu$, and let $C'$ be the preimage of $C$ under the surjective map $\alpha\: V(G')\to V(G)$ attached to the splitting graph of $G$. Then $C'$ is  a vertex cover of $G'$, but not necessarily of minimal size. Moreover, $|C'|\leq |C|+n'-n$. Thus $\mu'\leq \mu+n'-n$, which is equivalent to saying that $n'-\mu'\geq n-\mu$, as desired.

Now, let $G$ be  a path graph.  By the theorem of Auslander-Buchsbaum,  one has $\depth (S/I(G))=n- \projdim (S/I(G))$, and  $\depth (S'/I(G'))=n'- \projdim (S'/I(G'))$. The graph $G'$ has $r$ components which are path graphs $P_{n_1},\ldots,P_{n_r}$ for some $n_1,\ldots,n_r$.
By \cite[Corollary 7.7.35]{Ja}, we have
\begin{eqnarray*}
\label{depthpath}
 \projdim (S'/I(G'))=\sum_{i=1}^r\projdim (S/I(P_{n_i}))&\leq &\sum_{i=1}^{r}\frac{2n_i}{3}
  \\
  &=&\frac{2(n_1+n_2+\cdots+n_r)}{3}\\
  &=&\frac{2(n+r-1)}{3}=\frac{2n+2r-2}{3}.
 \end{eqnarray*}
Moreover, $\projdim(S/I(G))\geq \frac{2n-2}{3}$.
Hence
$\depth (S'/I(G'))-\depth (S/I(G))=r-1-[\projdim (S'/I(G'))-\projdim (S/I(G))]\geq r-1-[\frac {2n+2r-2}{3}-\frac{2n-2}{3}]=r-1 - \frac{2r}{3}=\frac{r}{3}-1>-1$. Hence  $\depth (S'/I(G'))-\depth (S/I(G))\geq 0$.

The argument for  cycles of even length is similar.
\end{proof}

The inequality $\depth (S'/I(G'))\geq \depth  (S/I(G))$ does not hold in general as the following example shows.

\begin{Example}
\label{depth}
Let $G$ and $G'$ be the graphs depicted in Figure \ref{depthnottrue}, where $G'$ is a splitting graph of $G$. Then $\depth  (S/I(G))=3$ and $\depth (S'/I(G'))=2$.

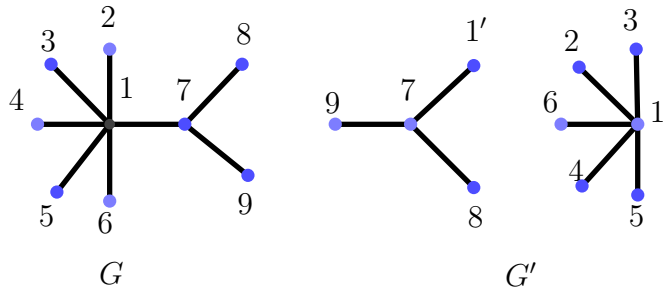
\begin{figure}[hbt]
\begin{center}
\newrgbcolor{ududff}{0.30196078431372547 0.30196078431372547 1.}
\newrgbcolor{xdxdff}{0.49019607843137253 0.49019607843137253 1.}
\psset{xunit=1.0cm,yunit=1.0cm,algebraic=true,dimen=middle,dotstyle=o,dotsize=5pt 0,linewidth=1.6pt,arrowsize=3pt 2,arrowinset=0.25}
\begin{pspicture*}(-4.44,-2.86)(10.56,1.88)
\psline[linewidth=2.pt](0.,0.)(1.,0.)
\psline[linewidth=2.pt](0.,0.)(-0.78,0.8)
\psline[linewidth=2.pt](0.,0.)(-0.96,0.)
\psline[linewidth=2.pt](0.,0.)(-0.7,-0.9)
\psline[linewidth=2.pt](0.,0.)(0.,1.)
\psline[linewidth=2.pt](0.,0.)(0.,-1.02)
\psline[linewidth=2.pt](1.,0.)(1.76,0.8)
\psline[linewidth=2.pt](1.,0.)(1.84,-0.68)
\psline[linewidth=2.pt](4.,0.)(4.84,0.78)
\psline[linewidth=2.pt](4.,0.)(4.84,-0.84)
\psline[linewidth=2.pt](4.,0.)(3.,0.)
\psline[linewidth=2.pt](7.02,0.)(7.,1.)
\psline[linewidth=2.pt](7.02,0.)(6.24,0.76)
\psline[linewidth=2.pt](7.02,0.)(6.,0.)
\psline[linewidth=2.pt](7.02,0.)(6.28,-0.82)
\psline[linewidth=2.pt](7.02,0.)(7.02,-0.94)
\rput[tl](0.12,0.62){$1$}
\rput[tl](-0.12,1.56){$2$}
\rput[tl](-0.92,1.24){$3$}
\rput[tl](-1.34,0.46){$4$}
\rput[tl](-0.94,-1.08){$5$}
\rput[tl](-0.16,-1.18){$6$}
\rput[tl](0.88,0.6){$7$}
\rput[tl](1.68,1.34){$8$}
\rput[tl](1.7,-0.92){$9$}
\rput[tl](7.18,0.3){$1$}
\rput[tl](6.04,1.24){$2$}
\rput[tl](6.82,1.52){$3$}
\rput[tl](6.1,-0.48){$4$}
\rput[tl](6.9,-1.08){$5$}
\rput[tl](5.76,0.44){$6$}
\rput[tl](3.86,0.6){$7$}
\rput[tl](4.76,-1.1){$8$}
\rput[tl](4.72,1.42){$1'$}
\rput[tl](2.86,0.42){$9$}
\rput[tl](-0.14,-1.84){$G$}
\rput[tl](5.26,-1.84){$G'$}
\begin{scriptsize}
\psdots[dotstyle=*,linecolor=ududff](1.,0.)
\psdots[dotstyle=*,linecolor=ududff](-0.78,0.8)
\psdots[dotstyle=*,linecolor=xdxdff](-0.96,0.)
\psdots[dotstyle=*,linecolor=ududff](-0.7,-0.9)
\psdots[dotstyle=*,linecolor=xdxdff](0.,1.)
\psdots[dotstyle=*,linecolor=xdxdff](0.,-1.02)
\psdots[dotstyle=*,linecolor=ududff](1.76,0.8)
\psdots[dotstyle=*,linecolor=ududff](1.84,-0.68)
\psdots[dotstyle=*,linecolor=xdxdff](4.,0.)
\psdots[dotstyle=*,linecolor=ududff](4.84,0.78)
\psdots[dotstyle=*,linecolor=ududff](4.84,-0.84)
\psdots[dotstyle=*,linecolor=xdxdff](3.,0.)
\psdots[dotstyle=*,linecolor=xdxdff](7.02,0.)
\psdots[dotstyle=*,linecolor=ududff](7.,1.)
\psdots[dotstyle=*,linecolor=ududff](6.24,0.76)
\psdots[dotstyle=*,linecolor=xdxdff](6.,0.)
\psdots[dotstyle=*,linecolor=ududff](6.28,-0.82)
\psdots[dotstyle=*,linecolor=ududff](7.02,-0.94)
\psdots[dotsize=4pt 0,dotstyle=*,linecolor=darkgray](0.,0.)
\end{scriptsize}
\end{pspicture*}
\end{center}
\caption{A graph $G$ and a splitting graph $G'$ of $G$. }
\label{depthnottrue}
\end{figure}
\end{Example}

In general, if $G$ is a chordal graph, the splitting graph of $G$ may not be again chordal. However, the following two results show that the splitting graph of a graph $G$ remains in the same family, when $G$ is a bipartite graph or a tree.

An  independent set of $G$ is a subset $W\subseteq V(G)$ such that $\{i,j\}\nsubseteq  W$ for all edges $\{i,j\}$ of $G$.

\begin{Proposition}
\label{stablebi}
If $G$ is a bipartite graph, then any splitting graph of $G$ is so.
 \end{Proposition}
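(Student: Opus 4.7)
The plan is to use the characterization of bipartite graphs via proper $2$-colorings, and simply pull back a $2$-coloring of $G$ along the splitting map $\alpha\colon V(G')\to V(G)$. Since $G$ is bipartite, there exists a map $c\colon V(G)\to\{0,1\}$ with $c(u)\neq c(v)$ whenever $\{u,v\}\in E(G)$. I would define $c'\colon V(G')\to\{0,1\}$ by $c'(v):=c(\alpha(v))$.

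The key step is then to verify that $c'$ is a proper $2$-coloring of $G'$. Let $\{v,w\}\in E(G')$. By definition of a splitting graph, $\{\alpha(v),\alpha(w)\}=\alpha(\{v,w\})\in E(G)$, so in particular $\alpha(v)\neq\alpha(w)$ and $c(\alpha(v))\neq c(\alpha(w))$. This gives $c'(v)\neq c'(w)$, so $c'$ is a proper $2$-coloring of $G'$, whence $G'$ is bipartite.

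I would write the proof as essentially these few lines. There is no real obstacle: the argument relies only on the defining property of a splitting map (that it carries edges of $G'$ to edges of $G$), together with the elementary fact that admitting a proper $2$-coloring characterizes bipartiteness. The surjectivity of $\alpha$, and the bijectivity of $E(G')\to E(G)$, are not needed for this direction; only the edge-preserving property matters. Alternatively, one could argue contrapositively: an odd cycle $v_1',\ldots,v_{2k+1}',v_1'$ in $G'$ would map under $\alpha$ to an odd closed walk in $G$, forcing $G$ to contain an odd cycle, contradicting bipartiteness.
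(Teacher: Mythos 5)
Your proof is correct and is essentially the paper's argument in different notation: the paper takes the bipartition $X\cup Y$ of $G$ and shows that the preimages $X'=\alpha^{-1}(X)$ and $Y'=\alpha^{-1}(Y)$ are independent sets in $G'$, which is exactly the pullback of a proper $2$-coloring along $\alpha$ that you describe. Both arguments rest on the single fact that $\alpha$ maps edges of $G'$ to edges of $G$, so no further comment is needed.
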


\begin{proof}
Let $G$ be a bipartite graph with the vertex partition $X\cup Y$, where $X$ and $Y$ are independent sets of $G$. Consider any splitting graph $G'$ of $G$ with the surjective map $\alpha\: V(G')\to V(G)$ attached to it. Set $X'=\{x\in V(G'):\ \alpha(x)\in X\}$ and
$Y'=\{x\in V(G'):\ \alpha(x)\in Y\}$. Then $X'\cup Y'$ is a partition of $V(G')$. For any two vertices $x_1,x_2\in X'$, we have $\{\alpha(x_1),\alpha(x_2)\}\notin E(G)$, since $X$ is an independent set of $G$. Thus $\{x_1,x_2\}\notin E(G')$. Hence $X'$ is an independent set of $G'$. Similarly $Y'$ is an independent set of $G'$. Thus $G'$ is bipartite with the vertex partition $X'\cup Y'$.

\end{proof}

\begin{Proposition}
\label{forest2}
 Let $G$ be a forest. Then any splitting graph of $G$ is a forest.
 \end{Proposition}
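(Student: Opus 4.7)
The plan is to argue by contradiction: assume $G'$ contains a cycle, and show that the bijectivity of the edge map $\alpha\colon E(G')\to E(G)$ forces $G$ to contain a cycle too, contradicting the hypothesis that $G$ is a forest.

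First, suppose that $G'$ is not a forest, so there exists a cycle in $G'$, say with vertex sequence $v_1,v_2,\ldots,v_k,v_{k+1}=v_1$ where $k\geq 3$ and the vertices $v_1,\ldots,v_k$ are pairwise distinct. Let $e'_i=\{v_i,v_{i+1}\}$ for $1\leq i\leq k$, so that $e'_1,\ldots,e'_k$ are $k$ pairwise distinct edges of $G'$. Set $e_i=\alpha(e'_i)=\{\alpha(v_i),\alpha(v_{i+1})\}\in E(G)$. Since the map $E(G')\to E(G)$ is bijective by the definition of a splitting map, the edges $e_1,\ldots,e_k$ are also pairwise distinct.

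Next, I would consider the subgraph $H$ of $G$ whose edge set is $\{e_1,\ldots,e_k\}$ and whose vertex set $V(H)$ consists of the endpoints of these edges; in particular $V(H)\subseteq\{\alpha(v_1),\ldots,\alpha(v_k)\}$. Thus $|V(H)|\leq k=|E(H)|$. Moreover, the sequence $\alpha(v_1),\alpha(v_2),\ldots,\alpha(v_k),\alpha(v_1)$ is a closed walk in $H$, since consecutive edges $e_i$ and $e_{i+1}$ share the vertex $\alpha(v_{i+1})$; this shows $H$ is connected (it contains at least one component, and every edge lies in the same component as this walk traverses them consecutively).

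Since $H$ is a subgraph of the forest $G$, $H$ is itself a forest. But any forest with $m$ vertices has at most $m-c$ edges, where $c$ is the number of its connected components; in particular $|E(H)|\leq |V(H)|-1 < |V(H)|$, contradicting $|E(H)|\geq |V(H)|$ established above. This contradiction shows that $G'$ contains no cycle, so $G'$ is a forest. The argument is quite short; the only point that must be carefully checked is that the edges $e_1,\ldots,e_k$ are \emph{distinct} in $G$, which is exactly the content of the injectivity of the edge map in the definition of a splitting graph.
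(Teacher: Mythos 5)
Your proof is correct and follows essentially the same route as the paper: push a cycle of $G'$ forward through $\alpha$, use the injectivity of the edge map to see that its image is a closed walk in $G$ with pairwise distinct edges, and conclude that $G$ could not have been a forest. The only difference is that you make the final contradiction explicit via the edge--vertex count for forests, a step the paper's (very brief) proof leaves implicit.
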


\begin{proof}
Suppose that $G'$ is not a forest and $e'_1,\ldots,e'_m$ be a closed walk in $G'$, where $e'_1,\ldots,e'_m$ are pairwise distinct. Let $\alpha\: V(G')\to V(G)$ be the map attached to the splitting graph of $G$. Then $\alpha(e'_1),\ldots,\alpha(e'_m)$ is a closed walk in $G$ with pairwise distinct edges, a contradiction.
\end{proof}

 In shifting theory, in particular for symmetric algebraic shifting, one uses the so-called   {\em stretching operator}, see \cite{HH} and \cite{Ka}. Let $K$ be a field and $\tilde{S}=K[x_1,x_2,\ldots]$ be the  polynomial ring in infinitely many variables, and let $\Mc$ be the set of monomials of $\tilde{S}$. The {\em stretching operator} is the  map $\sigma\: \Mc\to \Mc$ which assigns to a monomial $u=x_{i_1}x_{i_2}\cdots x_{i_d}$ with $i_1\leq i_2\leq  \cdots \leq i_d$ the stretched monomial $\sigma(u)=x_{i_1}x_{i_2+1}x_{i_3+2}\cdots x_{i_d+(d-1)}$.  It is clear that an iterated application of $\sigma$ transforms $u$ into a squarefree monomial ideal.

 Now let $S=K[x_1,\ldots,x_n]$,  $I\subset S$ a monomial ideal and $G(I)=\{u_1, \ldots, u_m\}$ be the unique minimal monomial set of generators of $I$. Then in a suitable  polynomial ring $S'=K[x_1,\ldots,x_r]$ with $r\geq n$ one has  $\{\sigma(u_1), \ldots, \sigma(u_m)\}\subset S'$, and we let  $I^\sigma$ be the ideal in $S'$ generated by the monomials  $\sigma(u_1),\ldots,\sigma(u_m)$. Usually we assume that $S'$ is the polynomial ring with $r$ chosen minimal such that the monomials $\sigma(u_i)$ belong to it.
 The following examples illustrate again its effect.

 Let  $I=(x_1x_3x_5,x_1^2x_4^3x_7)\subset K[x_1,\ldots,x_9]$, then
 \[I^\sigma=(x_1x_4x_7,x_1x_2x_6x_7x_8x_{12})\subset K[x_1,\ldots,x_{14}].\]

Applying  $\sigma$ $t$-times to  $u=x_{i_1}\cdots x_{i_d}$ with $i_1 \leq  i_2\leq \cdots \leq i_d$   we get ${\sigma^t(u)}=x_{i_1}x_{i_2+t}x_{i_3+2t}\cdots x_{i_d+t(d-1)}$.
 We let
\[
I^{\sigma^t}=(\sigma^t(u_1),\ldots,\sigma^t(u_m)) \in S_t,
\]
where $S_t=K[x_1,\ldots, x_{n_t}]$, $n_{t}=n+t(d-1)$ and $d=\max\{\deg(u):\ u\in G(I)\}$.

For example, if $I=(x_1x_2,x_2x_3)$, then $I^\sigma=(x_1x_3,x_2x_4)$. In this example, $I$ has a linear resolution, while $I^\sigma$ does not.  Thus, unlike polarization, which preserves the graded Betti numbers of a monomial ideal, this is not the case for the operator $\sigma$, unless the monomial ideal is strongly stable, see for example \cite{HH} for a detailed discussion.

For any graph $G$ on the vertex set $[n]$, let $G^{\sigma}$ be a graph defined by the equation $I(G)^{\sigma}=I(G^{\sigma})$. Notice that $G^{\sigma}$ is a splitting graph of $G$. One can easily see that there exists a positive integer $t_0$ such that $G^{\sigma^{t}}\iso G^{\sigma^{t_0}}$ for all $t\geq t_0$. We denote $G^{\sigma^{t_0}}$ by $G^*$ and call it the $\sigma-$stable graph of $G$. Observe that $G^*$ depends on the labeling on the vertices of $G$. Indeed, consider the $4$-cycle $G$ with edges $\{1,2 \}, \{2,3\}, \{3,4\}$ and $\{1,4\}$. Then $G^*$ has the edges  $\{1,4 \}, \{2,5\}, \{3,6\},$ and $\{1,6\}$. Thus $G^*$ is a graph with $2$ connected components, where each of them is a path graph. On the other hand, if we relabel $G$ such that  $\{2,3 \}, \{2,4\}, \{1,4\}$ and $\{1,3\}$ are the edges of  $G$, then $G^*$ is again a $4$-cycle.

By the above observations, for any graph $G$, the $\sigma$-stable graph $G^*$ is a splitting graph of $G$. The splitting map for $G^*$ can be explicitly described. Namely, if  $E(G)=\{\{i_k,j_k\}\: k=1,\ldots,m\}$ with $i_k<j_k$ for all $k$, then $E(G^*)=\{\{i_k,j_k+t_0\}\: k=1,\ldots,m\}$ with $t_0$ big enough and the map $\alpha\: V(G^*)\to V(G)$ with $\alpha(i_k)=i_k$ and $\alpha(j_k+t_0)=j_k$ for $k=1,\ldots,m$ is surjective and  induces a bijection between the edges of $G^*$ and $G$.

Not all splitting graphs of $G$ are of the form $G^*$ for a suitable  labeling of $G$, see Figure~\ref{three}.

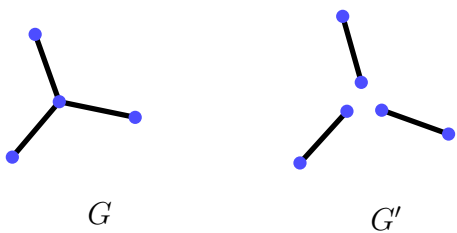
\begin{figure}[hbt]
\begin{center}
\newrgbcolor{ududff}{0.30196078431372547 0.30196078431372547 1.}
\psset{xunit=0.8cm,yunit=0.8cm,algebraic=true,dimen=middle,dotstyle=o,dotsize=5pt 0,linewidth=1.6pt,arrowsize=3pt 2,arrowinset=0.25}
\begin{pspicture*}(5.,0.)(16.,5.)
\psline[linewidth=2.pt](7.38,4.04)(7.78,2.92)
\psline[linewidth=2.pt](7.,2.)(7.78,2.92)
\psline[linewidth=2.pt](7.78,2.92)(9.04,2.66)
\rput[tl](8.25,1.24){$\large{G}$}
\psline[linewidth=2.pt](12.56,2.76)(11.78,1.9)
\psline[linewidth=2.pt](13.14,2.78)(14.25,2.38)
\psline[linewidth=2.pt](12.8,3.24)(12.49,4.34)
\rput[tl](12.95,1.18){$\large{G'}$}
\begin{scriptsize}
\psdots[dotstyle=*,linecolor=ududff](7.38,4.04)
\psdots[dotstyle=*,linecolor=ududff](7.,2.)
\psdots[dotstyle=*,linecolor=ududff](7.78,2.92)
\psdots[dotstyle=*,linecolor=ududff](9.04,2.66)
\psdots[dotstyle=*,linecolor=ududff](12.8,3.24)
\psdots[dotstyle=*,linecolor=ududff](12.56,2.76)
\psdots[dotstyle=*,linecolor=ududff](11.78,1.9)
\psdots[dotstyle=*,linecolor=ududff](13.14,2.78)
\psdots[dotstyle=*,linecolor=ududff](14.25,2.38)
\psdots[dotstyle=*,linecolor=ududff](12.49,4.34)
\end{scriptsize}
\end{pspicture*}
\end{center}
\caption{A splitting graph $G'$ of $G$ which is different from any $\sigma$-stable graph $G^*$ of $G$. }
\label{three}
\end{figure}

Note that if $G$ is connected with $n$ edges, then for each number $j\leq n$, there exists a splitting graph $G'$ of $G$ with $j$ connected components. However, this is not the case when we consider the set of $\sigma$-stable graphs $G^*$ of $G$. Therefore the  following question arises:
Let $G$ be a graph. For a given labeling $L$ of $G$, denote the number of connected components of the corresponding $G^*$ by $\gamma(L)$. Determine the set $C(G)=\{\gamma(L):\ L\ \textrm{is a labeling on }\ G\}$. For example if $G=P_n$, then $C(G)=[n-1]$ and if $G=C_n$, then $1\in C(G)$ if and only if $n$ is even.

\end{document}